\newtheorem{theorem}{Theorem}
\newtheorem{definition}{Definition}
\newcounter{tmp}
\def\ZZ{{\mathbb Z}}
\def\PP{{\mathbb P}}
\def\dA{\mathscr A}
\def\dB{\mathscr B}
\def\dC{\mathscr C}
\def\dD{\mathscr D}
\def\dE{\mathscr E}
\def\dF{\mathscr F}
\def\dS{\mathscr S}
\def\mM{\mathsf M}
\def\mN{\mathsf N}
\def\mP{\mathsf P}
\def\mT{\mathsf T}
\def\mS{\mathsf S}
\def\mR{\mathsf R}
\def\mY{\mathsf Y}
\def\ma{\mathsf a}
\def\mb{\mathsf b}
\def\mh{\mathsf h}
\def\mPhi{\mathsf \Phi}
\def\mCone{\mathsf{Cone}}
\def\kk{{\mathbf k}}
\def\dHom{\mathsf{Hom}}
\def\Hom{\operatorname{Hom}}
\newcommand{\SF}{\dS\!\dF\!\operatorname{--}\!}
\newcommand{\SFf}{\dS\!\dF_{fg}\!\operatorname{--}\!}
\newcommand{\prfdg}{\mathscr{P}\!\mathit{erf}\!\operatorname{--}}
\newcommand{\Ac}{\dA\!\mathit{c}\!\operatorname{--}\!}
\newcommand{\prf}{\mathcal{P}\!\mathit{erf}\!\operatorname{--}}
\newcommand{\Ob}{\operatorname{Ob}}
\newcommand{\Ho}{{\H^0}}
\def\Mod{{\mathscr M}\!\mathit{od}\!\operatorname{--}\!}
\def\supp{\operatorname{supp}}
\def\length{\operatorname{length}}
\def\hy{\mbox{-}}
\def\D{{\mathcal D}}
\def\H{{\mathcal H}}
\def\bR{{\mathbf R}}
\def\bL{{\mathbf L}}
\def\wt{\widetilde}
\title[]{Gluing of categories and Krull--Schmidt partners}
\author[]{Dmitri Orlov}
\address{Algebraic Geometry Department, Steklov Mathematical Institute of Russian Academy of Sciences, 8 Gubkin str., Moscow 119991, RUSSIA}
\email{orlov@mi.ras.ru}
\thanks{This work is supported by the Russian Science Foundation under  grant 14-50-00005}
\date{}
\begin{document}

\maketitle
Let $\kk$ be a field and let $\dA$ be a small $\kk$\!--linear differential graded (DG) category. We define a {\it (right) DG $\dA$\!--module} as a DG functor
$\mM: \dA^{op}\to \Mod \kk,$ where $\Mod \kk$ is the DG category of complexes of $\kk$\!--vector spaces.
Let $\Mod \dA$ be the DG
category of right DG $\dA$\!--modules and let $\Ac\dA$ be the full
DG subcategory consisting of all acyclic DG modules.
The
homotopy category $\Ho(\Mod\dA)$ is triangulated
and $\Ho (\Ac\dA)$ forms a full triangulated subcategory.
The {\it derived
category} $\D(\dA)$ is defined as the Verdier quotient
$
\D(\dA):=\Ho(\Mod\dA)/\Ho (\Ac\dA).
$

Each object $\mY\in\dA$ defines the representable DG module
$
\mh^\mY_{\dA}(-):=\dHom_{\dA}(-, \mY).
$
The DG module is called {\it free} if it is isomorphic to a direct sum of  DG modules of the form
$\mh^\mY[m].$
A DG module
$\mP$ is called {\it semi-free} if it has a filtration
$0=\mPhi_0\subset \mPhi_1\subset ...=\mP$
with free quotient  $\mPhi_{i+1}/\mPhi_i.$
The full
DG subcategory of semi-free DG modules is denoted by $\SF\dA.$
The canonical DG functor $\SF\dA\hookrightarrow\Mod\dA$ induces an equivalence
$\Ho(\SF\dA)\stackrel{\sim}{\to} \D(\dA)$ of the triangulated categories
\cite{Ke, LO}.
We denote by $\SFf\dA\subset \SF\dA$ the full DG subcategory of finitely generated semi-free
DG modules, i.e. $\mPhi_n=\mP$ for some $n$ and $\mPhi_{i+1}/\mPhi_i$ is a finite direct sum of
$\mh^Y[m].$ A {\em DG category of perfect modules} $\prfdg\dA$
is the full DG subcategory of $\SF\dA$ consisting of all DG modules that are homotopy
equivalent to  direct summands of object of $\SFf\dA.$
Denote by $\prf\dA$ the homotopy category $\Ho(\prfdg\dA).$ It is triangulated
and it is equivalent to the subcategory of compact objects $\D(\dA)^c\subset \D(\dA).$
If $\dA$ is pretriangulated and $\Ho(\dA)$ is idempotent complete, then
the Yoneda functor $\mh: \dA\to\prfdg\dA$ is a quasi-equivalence.

\begin{definition}\label{upper_tr}
Let $\dA, \dB$ be small DG categories and let $\mS$ be a $\dB\hy\dA$\!--bimodule.
Define an {\em upper triangular} DG category $\dC=\dA\underset{\mS}{\with}\dB$ corresponding to  the data
$(\dA, \dB, \mS)$ as follows:
\begin{enumerate}
\item[1)] $\Ob(\dC)=\Ob(\dA)\bigsqcup\Ob(\dB),$

\item[2)]
$
\dHom_{\dC}(X, Y)=
\begin{cases}
 \dHom_{\dA}(X, Y)\; \text{or}\quad \dHom_{\dB}(X, Y),& \text{ when} \; X,Y\in\dA\; \text{or}\quad X,Y\in\dB\\
      \mS(Y, X), & \text{ when $X\in\dA, Y\in\dB$}\\
      0, & \text{ when $X\in\dB, Y\in\dA$}
\end{cases}
$
\end{enumerate}
with the composition law coming from $\dA,\dB$ and the bimodule structure on $\mS$ (see \cite{Ta, O_glue}).
\end{definition}

\begin{definition}\label{gluing_cat}
Let $\dA$ and $\dB$ be  small DG categories and let $\mS$ be a $\dB\hy\dA$\!--bimodule.
 A {\em gluing} $\prfdg\dA\underset{\mS}{\oright}\prfdg\dB$ (resp. $\D(\dA)\underset{\mS}{\oright}\D(\dB)$) via $\mS$ is defined as $\prfdg\dC$
 (resp. $\D(\dC)$),
 where $\dC=\dA\underset{\mS}{\with}\dB.$
\end{definition}

The natural inclusions $\ma: \dA\hookrightarrow \dA\underset{\mS}{\with}\dB$ and
$\mb: \dB\hookrightarrow \dA\underset{\mS}{\with}\dB$ define the fully faithful DG
functors $\ma^*$ and $\mb^*$ from $\prfdg\dA$ and $\prfdg\dB$  to $\prfdg\dA\underset{\mS}{\oright}\prfdg\dB,$
which induce fully faithful functors $a^*, b^*$ from $\D(\dA), \D(\dB)$ (resp. $\prf\dA, \prf\dB$) to
$\D(\dC)$ (resp. $\prf\dC$) and give semi-orthogonal decompositions of triangulated categories
$
\D(\dC)=\langle\D(\dA),\; \D(\dB)\rangle$ and
$
\prf\dC=\langle\prf\dA,\; \prf\dB\rangle.
$
The natural restriction functors $\ma_*, \mb_*$ from $\SF(\dA\underset{\mS}{\with}\dB)$ to $\SF\dA$ and $\SF\dB$
induce derived functors $a_*, b_*$ from $\D(\dA\underset{\mS}{\with}\dB)$ to $\D(\dA)$ and $\D(\dB)$
that send perfect modules to perfect modules.

Let $\mT$ be a
$\dB\hy\dA$\!--bimodule.
For each DG $\dB$\!--module $\mN$ we can construct a DG $\dA$\!--module
$\mN\otimes_{\dB} \mT.$
The DG functor $(-)\otimes_{\dB} \mT: \Mod\dB \to \Mod\dA$ does not respect
quasi-isomorphisms, in general. However, if $\mT$ is semi-free $\dB\hy\dA$\!--bimodule
we obtain a DG functor $(-)\otimes_{\dB} \mT: \SF\dB \to \SF\dA$ that on homotopy level
induces the derived functor
$(-)\stackrel{\bL}{\otimes}_{\dB}\mT: \D(\dB)\to\D(\dA).$
In the case when the functor $(-)\stackrel{\bL}{\otimes}_{\dB}\mT$ sends perfect modules to perfect modules,
we also obtain a DG functor $(-)\otimes_{\dB} \mT: \prfdg\dB \to \prfdg\dA.$

Any $\dB\hy\dA$\!--bimodule $\mP$ gives a $\dB\hy\dC$\!--bimodule
$\underline{\mP}$ such that $\underline{\mP}(B, C)=\mP(B, C),$ when $C\in \dA,$ and is $0$ otherwise.
Any morphism $\phi: \mS\to \mT$ of $\dB\hy\dA$\!--bimodules induces a $\dB\hy\dC$\!--bimodule
$\wt\mT$ by the rule
$\wt{\mT}(B, C)=
 \mT(B, C),$ when $C\in\dA,$ and
$\wt{\mT}(B, C)=\dHom_{\dB}(C, B),$ when $C\in\dB,$ with a natural bimodule structure
taking in account the composition $\dHom_{\dB}(C, B)\otimes \mS(C, A)\to \mS(B, A)\stackrel{\phi}{\to}\mT(B, A)$ with $C\in\dB.$

There is an isomorphism $\wt{\mS}(B, -)\cong\dHom_{\dC}(-,B)=\mh^{B}_{\dC}$ and the functor
$(-)\stackrel{\bL}{\otimes}_{\dB}\wt{\mS}: \D(\dB)\to\D(\dC)$ is isomorphic to  the fully faithful functor $b^*: \D(\dB)\to\D(\dC).$

\begin{theorem}\label{main1} Let DG categories $\dA, \dB,$ DG $\dB\hy\dA$\!--bimodules $\mS, \mT,$ and a morphism $\phi:\mS\to\mT$ be as above.
Let $\mR=\mCone(\phi)$ be the cone of $\phi.$
Suppose that for any  DG  $\dB$\!--modules $\mM$ and $\mN$ the following condition holds
\begin{equation}\label{cond}
\Hom_{\D(\dA)}(\mM\stackrel{\bL}{\otimes}_{\dB}\mR,\; \mN\stackrel{\bL}{\otimes}_{\dB}\mT)=0.
\end{equation}
Then the derived functor $(-)\stackrel{\bL}{\otimes}_{\dB}\wt{\mT}: \D(\dB)\to\D(\dA\underset{\mS}{\with}\dB)$
is fully faithful and it induces a semi-orthogonal decomposition $\D(\dA\underset{\mS}{\with}\dB)=\langle \D(\dE),\; \D(\dB)\rangle$
for some small DG category $\dE.$
\end{theorem}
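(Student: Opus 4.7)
The proof is driven by the canonical morphism $\wt\phi:\wt\mS\to\wt\mT$ of $\dB\hy\dC$\!--bimodules induced by $\phi$: it equals $\phi$ on the $\dA$\!--component and the identity on the $\dB$\!--component. Its cone vanishes on $\dB$ and equals $\mR$ on $\dA$, so it is isomorphic to the extension by zero $\underline{\mR}$ of $\mR$ to a $\dB\hy\dC$\!--bimodule. Tensoring the triangle $\wt\mS\to\wt\mT\to\underline{\mR}$ with $\mN\in\D(\dB)$ over $\dB$, and using the identification $\mN\stackrel{\bL}{\otimes}_\dB\wt\mS\cong b^*\mN$ recalled in the excerpt, produces the fundamental triangle
$$
b^*\mN\;\longrightarrow\;F_\mT\mN\;\longrightarrow\;a^*\bigl(\mN\stackrel{\bL}{\otimes}_\dB\mR\bigr)\;\longrightarrow\;b^*\mN[1] \qquad(\star)
$$
in $\D(\dC)$, where $F_\mT:=(-)\stackrel{\bL}{\otimes}_\dB\wt\mT$ and $\dC=\dA\underset{\mS}{\with}\dB$.

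For full faithfulness of $F_\mT$, I would extract two inputs from $(\star)$. Applying $\Hom(b^*\mM,-)$ to $(\star)$ for $\mN$ and invoking the semi-orthogonality $\Hom(b^*,a^*)=0$ together with full faithfulness of $b^*$ yields $\Hom(b^*\mM,F_\mT\mN[k])\cong\Hom_{\D(\dB)}(\mM,\mN[k])$ for every $k$. Applying $\Hom(a^*(\mM\stackrel{\bL}{\otimes}_\dB\mR),-)$ to $(\star)$ and using the adjunction $a^*\dashv a_*$, the connecting map of the resulting long exact sequence is identified with the map induced on $\Hom_{\D(\dA)}(\mM\stackrel{\bL}{\otimes}_\dB\mR,-)$ by the boundary of the triangle $\mN\stackrel{\bL}{\otimes}_\dB\mS\to\mN\stackrel{\bL}{\otimes}_\dB\mT\to\mN\stackrel{\bL}{\otimes}_\dB\mR$ in $\D(\dA)$; hypothesis~(\ref{cond}) at all shifts forces this map to be an isomorphism, whence $\Hom(a^*(\mM\stackrel{\bL}{\otimes}_\dB\mR),F_\mT\mN[k])=0$ for every $k$. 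Substituting both facts into the long exact sequence obtained by applying $\Hom(-,F_\mT\mN)$ to $(\star)$ for $\mM$ gives $\Hom(F_\mT\mM,F_\mT\mN)\cong\Hom_{\D(\dB)}(\mM,\mN)$.

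For the semi-orthogonal decomposition, the tensor--Hom adjunction endows $F_\mT$ with a right adjoint $F_\mT^R=\bR\dHom_\dC(\wt\mT,-)$; full faithfulness forces $F_\mT^R F_\mT\simeq\mathsf{id}$, and the counit triangle $F_\mT F_\mT^R X\to X\to\tau_\dE X\to F_\mT F_\mT^R X[1]$ gives a triangulated semi-orthogonal decomposition $\D(\dC)=\langle\dE,F_\mT\D(\dB)\rangle$ with $\dE:=\ker F_\mT^R=(F_\mT\D(\dB))^\perp$. It remains to realize $\dE$ as $\D(\dE)$ for some small DG category $\dE$. My plan is to construct $\dE$ by right-mutating representables inside a DG model for $\D(\dC)$: for each $A\in\dA$ let $\wt A$ be the DG cone of a lift of the counit $F_\mT F_\mT^R\mh^A_\dC\to\mh^A_\dC$ (with $F_\mT^R\mh^A_\dC(B)\simeq\dHom_\dA(\mR(B,-),\mh^A_\dA)$ up to a shift, as computable from $(\star)$), and take $\dE$ to be the full DG subcategory spanned by $\{\wt A\}_{A\in\dA}$. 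Applying $(\star)$ with $\mN=\mh^B_\dB$ exhibits each $\mh^B_\dC$ as an extension of $F_\mT(\mh^B_\dB)\in F_\mT\D(\dB)$ by an object of the localizing hull of $\{a^*A:A\in\dA\}$, so the $\dE$-projections of the generators $\{\mh^A_\dC,\mh^B_\dC\}$ of $\D(\dC)$ all lie in the localizing hull of $\{\wt A\}$, giving the required generation of $\dE$. The principal obstacle will be handling the DG-level details --- in particular showing that $\dE$ is small and that the comparison $\D(\dE)\to\dE$ is an equivalence, which amounts to verifying the necessary compactness and enhancement properties of the mutations $\wt A$.
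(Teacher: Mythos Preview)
Your argument is correct and follows the same route as the paper: both tensor the triangle $\wt\mS\to\wt\mT\to\underline\mR$ with $\mM,\mN$ and use the two vanishings $\Hom_{\D(\dC)}(\mM\stackrel{\bL}{\otimes}_\dB\underline\mR,\,\mN\stackrel{\bL}{\otimes}_\dB\wt\mT)=0$ and $\Hom_{\D(\dC)}(\mM\stackrel{\bL}{\otimes}_\dB\wt\mS,\,\mN\stackrel{\bL}{\otimes}_\dB\underline\mR)=0$ to identify $\Hom(F_\mT\mM,F_\mT\mN)$ with $\Hom(b^*\mM,b^*\mN)$. The paper is simply more streamlined: it gets the first vanishing in one line from $a^*\dashv a_*$ together with $a_*F_\mT\cong(-)\stackrel{\bL}{\otimes}_\dB\mT$ (so your connecting-map detour is unnecessary---after adjunction the vanishing \emph{is} hypothesis~(\ref{cond}), not a consequence of the boundary map being an isomorphism), and it stops at full faithfulness, leaving the existence of $\dE$ as a standard consequence rather than spelling out the mutation construction you sketch.
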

\begin{proof}
The morphism $\phi$ induces a morphism $\wt{\phi}: \wt{\mS}\to \wt{\mT},$ the cone of which is quasi-isomorphic to
$\dB\hy\dC$\!--bimodule
$\underline{\mR}.$ The functor $(-)\stackrel{\bL}{\otimes}_{\dB}\underline{\mR}$ is isomorphic to the composition of $a^*$ and
$(-)\stackrel{\bL}{\otimes}_{\dB}\mR,$ while the functor $(-)\stackrel{\bL}{\otimes}_{\dB}\mT$ is isomorphic to $a_* (- \stackrel{\bL}{\otimes}_{\dB}\wt{\mT}).$
Thus, we obtain the following isomorphisms
\[
\begin{array}{lllll}
\Hom_{\D(\dC)}(\mM\stackrel{\bL}{\otimes}_{\dB}\underline{\mR},\; \mN\stackrel{\bL}{\otimes}_{\dB}\wt{\mT})
&
\cong
&
\Hom_{\D(\dA)}(\mM\stackrel{\bL}{\otimes}_{\dB}\mR,\; \mN\stackrel{\bL}{\otimes}_{\dB}\mT)
&
=
&0,
\\
\Hom_{\D(\dC)}(\mM\stackrel{\bL}{\otimes}_{\dB}\wt{\mS},\; \mN\stackrel{\bL}{\otimes}_{\dB}\underline{\mR})
&
\cong
&
\Hom_{\D(\dC)}(b^*(\mM),\; a^*(\mN\stackrel{\bL}{\otimes}_{\dB}\mR))
&
=
&0.
\end{array}
\]
Therefore for any DG  $\dB$\!--modules $\mM$ and $\mN$ we have isomorphisms
\[
\Hom_{\D(\dC)}(\mM\stackrel{\bL}{\otimes}_{\dB}\wt{\mT},\; \mN\stackrel{\bL}{\otimes}_{\dB}\wt{\mT})
\cong
\Hom_{\D(\dC)}(\mM\stackrel{\bL}{\otimes}_{\dB}\wt{\mS},\; \mN\stackrel{\bL}{\otimes}_{\dB}\wt{\mT})
\cong
\Hom_{\D(\dC)}(\mM\stackrel{\bL}{\otimes}_{\dB}\wt{\mS},\; \mN\stackrel{\bL}{\otimes}_{\dB}\wt{\mS}).
\]
The functor $(-)\stackrel{\bL}{\otimes}_{\dB}\wt{\mS}$ is fully faithful. Hence the functor $(-)\stackrel{\bL}{\otimes}_{\dB}\wt{\mT}$
is fully faithful too.
\end{proof}

\begin{theorem}\label{main2}
Let DG categories $\dA, \dB,$ DG $\dB\hy\dA$\!--bimodules $\mS, \mT, \mR,$ and a morphism $\phi$ be as above.
Assume that condition (\ref{cond}) holds. If the functors $(-)\stackrel{\bL}{\otimes}_{\dB}\wt{\mT}$ and
$\bR\Hom_{\dC}(\wt{\mT}, -)$ sends perfect modules to perfect ones, then there is
a semi-orthogonal decomposition of the form  $\prf(\dA\underset{\mS}{\with}\dB)=\langle \prf\dE,\; \prf\dB\rangle$
for a small DG category $\dE.$
\end{theorem}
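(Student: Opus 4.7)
The strategy is to refine the semi-orthogonal decomposition of $\D(\dC)$, where $\dC = \dA\underset{\mS}{\with}\dB$, produced in Theorem \ref{main1} down to the subcategories of perfect modules, and to realize the left component as $\prf\dE$ for a small DG category $\dE$.

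Set $F := (-)\stackrel{\bL}{\otimes}_{\dB}\wt{\mT}$ and $F^R := \bR\Hom_{\dC}(\wt{\mT},-)$; the latter is right adjoint to $F$ by tensor-Hom adjunction. Theorem \ref{main1} supplies the decomposition $\D(\dC)=\langle\D(\dE),\D(\dB)\rangle$ with $\D(\dB)$ embedded via $F$, together with, for every $X\in\D(\dC)$, a distinguished triangle $FF^R(X)\to X\to \pi(X)\to FF^R(X)[1]$ with $\pi(X)\in\D(\dE)$. The first step is to feed the hypotheses of Theorem \ref{main2} into this triangle: if $X\in\prf\dC$, then $F^R(X)\in\prf\dB$ by the second hypothesis, hence $FF^R(X)\in\prf\dC$ by the first hypothesis, and so $\pi(X)\in\prf\dC$. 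This yields
\[
\prf\dC = \langle \D(\dE)\cap\prf\dC,\; F(\prf\dB)\rangle,
\]
whose right component is equivalent to $\prf\dB$ since $F$ is fully faithful and sends $\prf\dB$ into $\prf\dC$.

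To finish, I would identify the left component with $\prf\dE$. Observe that $\D(\dE) = \ker F^R$; since $F$ preserves compact objects, its right adjoint $F^R$ preserves arbitrary coproducts, so $\D(\dE)$ is closed under coproducts in $\D(\dC)$, and these coproducts agree with those computed internally in $\D(\dE)$. Consequently an object of $\D(\dE)$ is compact in $\D(\dE)$ if and only if it is compact in $\D(\dC)$, giving $\prf\dE = \D(\dE)^c = \D(\dE)\cap\prf\dC$ as triangulated subcategories of $\D(\dC)$.

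The main obstacle, such as it is, lies in ensuring that the DG category $\dE$ produced in Theorem \ref{main1} is compatible with this identification, i.e.\ that $\prfdg\dE$ enhances exactly $\D(\dE)\cap\prf\dC$. This is a routine application of Keller's setup: pick $\dE$ to be a small DG subcategory of $\prfdg\dC$ whose objects compactly generate the left orthogonal of $F(\D(\dB))$, so that both $\D(\dE)\simeq\ker F^R$ and $\Ho(\prfdg\dE)\simeq\D(\dE)\cap\prf\dC$ hold simultaneously.
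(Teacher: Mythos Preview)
Your argument is correct and matches the approach implicit in the paper, which gives no separate proof of Theorem~\ref{main2}: the statement is understood as the restriction of the semi-orthogonal decomposition from Theorem~\ref{main1} to compact objects, using precisely the two preservation hypotheses on $(-)\stackrel{\bL}{\otimes}_{\dB}\wt{\mT}$ and $\bR\Hom_{\dC}(\wt{\mT},-)$ to ensure the decomposition triangle $FF^R(X)\to X\to\pi(X)$ stays in $\prf\dC$. Your last two paragraphs spell out the identification $\prf\dE=\D(\dE)\cap\prf\dC$ that the paper leaves to the reader; the only point you might tighten is the ``only if'' direction of the compactness claim, which follows once you note that $\D(\dE)$ is compactly generated by $\pi(\prf\dC)\subseteq\D(\dE)\cap\prf\dC$.
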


\begin{definition} The category $\prfdg\dE$ (resp. $\prf\dE$) from Theorem \ref{main2} will called
Krull-Schmidt partner (KS partner) for $\prfdg\dA$ (resp. $\prf\dA$).
\end{definition}
Since the composition of $(-)\stackrel{\bL}{\otimes}_{\dB}\wt{\mT}$ and
$\bR\Hom_{\dC}(\wt{\mS}, -)$ is isomorphic to the identity,  K-theories of $\prfdg\dE$ and $\prfdg\dA$ are equal. Moreover, their K-motives  are isomorphic.


Let $X$ be a smooth projective scheme and let $\mP_s\in \prfdg X,\; s=1,2$ be two perfect complexes
such that their supports $\supp\mP_1$ and $\supp\mP_2$ do not meet.
Consider the gluing $\dD=\prfdg X\underset{\mS}{\oright}\prfdg\kk$ with $\mS=\mP_1\oplus\mP_2,$ and take $\mT=\mP_2.$
By Theorem \ref{main2} the functor $(-)\stackrel{\bL}{\otimes}_{\kk}\wt{\mT}$ from
$\prf\kk$ to $H^0(\dD)$ is fully faithful, and we obtain a semi-orthogonal decomposition
$H^0(\dD)=\langle \prf\dE, \prf\kk\rangle.$ Thus we get a KS partner $\prfdg\dE$ for $\prfdg X.$

For example, let $X=C$ be a smooth projective curve of genus $g=g(C),$ and let $\mP_1, \mP_2$ be torsion coherent sheaves of lengths
$l_s=\length P_s,\; s=1,2$  such that
$\supp\mP_1\cap\supp\mP_2=\emptyset.$
It can be easily checked that the KS partner $\prf\dE$ is not equivalent to $\prf C.$
Indeed, the integral bilinear form $\chi(E, F)=\sum_m (-1)^m \dim\Hom(E, F[m])$ on $K_0(C)$ goes through $\ZZ^2=H^{ev}(C, \ZZ).$
In this case the forms $\chi$ on $K_0(C)$ and $K_0(\dE)$ are equal to
\begin{equation}
\chi_C=\begin{pmatrix}
1-g & 1\\
-1 & 0
\end{pmatrix},\qquad{and}\qquad
\chi_{\dE}=\chi_t:=\begin{pmatrix}
t & 1\\
-1 & 0
\end{pmatrix}, \qquad\text{where}\quad t=1-g -l_1 l_2.
\end{equation}
The integral bilinear forms $\chi_t$ are not equivalent for different $t.$ Hence the categories $\prf C$ and $\prf\dE$ that are KS partners of $\prf C$ are not equivalent for different
$t=1-g-l_1l_2.$ The case of $\PP^1$ and two points $\mP_s=p_s,\; s=1,2$ is discussed in \cite[3.1]{O_q}.

\end{document}